\newtheorem{theorem}{Theorem}[section]
\newtheorem{prop}[theorem]{Proposition}
\newtheorem{lemma}[theorem]{Lemma}
\newtheorem{cor}[theorem]{Corollary}
\newtheorem{defn}[theorem]{Definition}
\theoremstyle{definition}
\newtheorem{remark}[theorem]{Remark}
\newcommand{\SO} {\ensuremath {{\rm SO}}}
\newcommand{\GL} {\ensuremath {{\rm GL}}}
\newcommand{\Or} {\ensuremath {{\rm O}}}
\newcommand{\spin} {\ensuremath {{\rm spin}}}
\title{Hyperbolic manifolds without $\spin^\mathbb{C}$ structures and non-vanishing higher order Stiefel-Whitney classes}
\author{Alan W. Reid and Connor Sell}
\date{ }
\begin{document}
	
	\maketitle
	
	\begin{abstract}
	We show that in every commensurability class of cusped arithmetic hyperbolic manifolds of simplest type of dimension $2n+2\geq 6$ there are manifolds $M$ such that the Stiefel-Whitney classes $w_{2j}(M)$ 
	are non-vanishing for all $0 \leq 2j \leq n$. We also show that for the same commensurability classes there are manifolds (different from the previous ones) that do not admit a $\spin^\mathbb{C}$ structure.
\end{abstract}

	\section{Introduction}
	\label{intro}
	The Stiefel-Whitney classes $w_k(M)$ of a manifold $M$ are defined as the Stiefel-Whitney classes of the tangent bundle of $M$, denoted by $TM$ (we refer the reader to \S \ref{SW_classes} for a definition of Stiefel-Whitney classes). The question as to whether Stiefel-Whitney classes
	vanish or not has a rich history in geometric and algebraic topology. For example, $w_1(M)=0$ if and only if $M$ is orientable and $w_2(M)=0$ if and only if $M$ admits a spin structure. In addition, the non-vanishing of Stiefel-Whitney classes can be used as an obstruction to embedding/immersing a manifold into certain Euclidean spaces. For work on vanishing and non-vanishing of Stiefel-Whitney classes we refer the
	reader to \cite{Mas} and the more recent paper \cite{HKK} (which proves vanishing of Stiefel-Whitney classes for so-called moment angled manifolds). 
	
	Of most relevance to the topics of this paper is the work of Sullivan \cite{Su} which showed that every finite volume hyperbolic manifold in any dimension has a finite sheeted cover that is stably parallelizable, and hence by properties of Stiefel-Whitney classes has vanishing Stiefel-Whitney classes for all $k\geq 1$ (see \S \ref{SW_classes}). On the other hand, in \cite{LRspin}, examples of cusped orientable finite volume hyperbolic manifolds $M$ were constructed in all dimensions $\geq 5$ for which $w_2(M)\neq0$, and more recently, in \cite{MRS} Martelli, Riolo, and Slavich constructed closed orientable hyperbolic n-manifolds $M$ in all dimensions $n\geq 4$ for which $w_2(M)\neq 0$. 
	
Another natural extension of a spin structure on a manifold is that of a $\spin^\mathbb{C}$ structure on a manifold (see \S \ref{spinC} for a definition).  A spin structure naturally defines a $\spin^\mathbb{C}$ structure (see \S \ref{spinC}), and so
since all compact orientable manifolds in dimensions $2$ and $3$ admit a spin structure, they admit a $\spin^\mathbb{C}$ structure. On the other hand, there are compact orientable $4$-manifolds that do not admit a 
spin structure (for example certain flat $4$-manifolds \cite{PS}), but it is known that every orientable $4$-manifold does admit a $\spin^\mathbb{C}$ structure \cite{TV}.

In this paper we extend the results of \cite{LRspin} to construct cusped hyperbolic manifolds with non-vanishing higher order Stiefel-Whitney classes, as well as to provide examples of cusped hyperbolic manifolds that do not admit a 
$\spin^\mathbb{C}$ structure. The most precise version of our results arises in the context of arithmetic hyperbolic manifolds.
	
\begin{theorem}
\label{theorem_arithmetic}
Let $n$ be a positive integer.  For every $m \geq 2n+2\geq 6$, every cusped arithmetic hyperbolic $m$-manifold is commensurable with orientable hyperbolic $m$-manifolds $M_1$ and $M_2$ such that:
\begin{itemize}
\item $w_{2j}(M_1)\neq 0$ for all $0 \leq 2j \leq n$;
\item $M_2$ does not admit a $\spin^\mathbb{C}$ structure.
\end{itemize}
\end{theorem}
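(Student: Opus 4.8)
The plan is to reduce everything to a statement about arithmetic hyperbolic manifolds of simplest type (defined via quadratic forms) and then realize the desired characteristic-class behavior inside a single commensurability class by passing to carefully chosen finite covers and performing a controlled cut-and-paste (or totally geodesic immersion) construction. Let me think about how the pieces fit together.

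Key facts I'd assume are available or standard: Sullivan's theorem (stably parallelizable covers exist, giving vanishing SW classes on those covers); the construction in [LRspin] producing cusped manifolds with $w_2 \neq 0$; the relationship $w_2 = 0 \iff$ spin, and the $\mathrm{spin}^{\mathbb C}$ obstruction being the integral Bockstein $\beta(w_2) = W_3 \in H^3(M;\mathbb Z)$, i.e. $M$ admits $\mathrm{spin}^{\mathbb C} \iff W_3 = 0 \iff w_2$ lifts to an integral class.

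Let me sketch the two constructions.

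For $M_2$ (no $\mathrm{spin}^{\mathbb C}$): I need $W_3 \neq 0$, i.e. $w_2$ does not lift to $H^2(M;\mathbb Z)$. The strategy from [LRspin] gives $w_2 \neq 0$; I'd want to arrange that the nonzero $w_2$ class has nontrivial integral Bockstein. The natural mechanism: build a manifold whose $w_2$ is detected by a $2$-torsion phenomenon in cohomology that genuinely doesn't lift — i.e. find a codimension-2 or appropriate cycle (perhaps a totally geodesic submanifold, or a cusp cross-section carrying suitable flat $4$-manifold topology) whose normal/tangent data forces $\beta(w_2) \neq 0$. Flat manifolds are a good source here, which connects to the [PS] reference about flat $4$-manifolds lacking spin structures, and cusp cross-sections of hyperbolic manifolds of simplest type are precisely (finite covers of) flat manifolds, realizable by Long–Reid-type results on which flat manifolds arise as cusp sections.

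For $M_1$ (non-vanishing $w_{2j}$ up to $w_n$): higher SW classes are harder. The Wu formula gives $w_k$ in terms of Wu classes $v_k$ and Steenrod operations, and for hyperbolic manifolds much vanishes on covers; so to force $w_{2j} \neq 0$ I'd look for a geometric source of nontrivial cohomology. The natural idea is to embed or immerse totally geodesic submanifolds and use a product/join structure, or more concretely to arrange that $M_1$ contains (as a cusp cross-section or as a totally geodesic piece) a manifold whose tangential structure contributes odd Wu classes, then propagate via naturality. In dimension $2n+2$ one wants $w_2, w_4, \dots, w_n$ simultaneously nonzero, which strongly suggests taking a product-like or iterated construction where a single primitive $w_2 \neq 0$ gets boosted by the cup-product structure of the relevant flat nilmanifold/torus cusp sections, since on a torus all even SW classes can be made nonzero by taking suitable sub-bundles.

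\medskip

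The main obstacle, I expect, is the \emph{simultaneous} non-vanishing of all $w_{2j}$ with $0 \le 2j \le n$ within one commensurability class, for $M_1$. Getting a single higher class nonzero via a geodesic submanifold or cusp section is plausible, but controlling the whole range at once — and doing so for \emph{every} commensurability class of simplest type in \emph{every} dimension $m \ge 2n+2$ — requires a uniform input. My best guess for that uniform input is a careful analysis of the cohomology of the cusp cross-sections together with a trick to promote $w_2$-type nontriviality across dimensions, plus a dimension-raising argument to handle $m > 2n+2$ (taking products with geodesic copies or restricting to the relevant congruence covers so that the cusp geometry stays controllable). Thus I would organize the proof as: (1) reduce to simplest type and fix the commensurability class; (2) identify the relevant flat cusp cross-sections and their integral/mod-2 cohomology; (3) build $M_2$ by forcing $W_3 \neq 0$ through a non-lifting $w_2$ coming from such a cusp section; (4) build $M_1$ by a separate construction arranging all lower even SW classes nonzero simultaneously, with the cusp cohomology as the engine; and (5) verify orientability and the dimension bookkeeping for general $m \ge 2n+2$.
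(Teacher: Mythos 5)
Your proposal correctly identifies the general framework---flat cusp cross-sections with trivial normal bundle, restriction of characteristic classes along the inclusion, and Long--Reid-type realization of flat manifolds as cusp cross-sections---but it leaves placeholders exactly where the paper's actual mathematical content lies, and one of your suggested mechanisms fails outright. For $M_1$, you propose to ``boost'' a primitive $w_2\neq 0$ using the cup-product structure of torus/nilmanifold cusp sections; this cannot work, since a torus is parallelizable (all its Stiefel--Whitney classes vanish), and there is no naturality mechanism that promotes $w_2\neq 0$ to the simultaneous non-vanishing of $w_4, w_6,\dots$. The paper's input here is a theorem of Im and Kim \cite{ImKim}: there exist closed orientable flat $(2n+1)$-manifolds $N$ with $w_{2j}(N)\neq 0$ for all $0\le 2j\le n$. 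One takes $N$ (or $N\times S^1$ when $m$ is odd) as a cusp cross-section and pulls back non-vanishing via Remark \ref{subman}. Similarly, for $M_2$ the paper does not force $W_3=\beta(w_2)\neq 0$ by hand; it quotes the theorem of Lutowski--Popko--Szczepa\'nski \cite{LPS} that Generalized Hantzsche--Wendt flat manifolds of dimension $>3$ admit no $\spin^\mathbb{C}$ structure, together with Lemma \ref{restrict_spinC} (a codimension-one submanifold with trivial normal bundle of a $\spin^\mathbb{C}$ manifold is again $\spin^\mathbb{C}$). Your outline contains no candidate flat manifolds playing either role.

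The most serious gap is in steps (1) and (5) of your outline: hitting \emph{every} commensurability class. Your proposed tools (congruence covers, cusp cohomology analysis) do not control the commensurability class at all. The paper's mechanism is arithmetic: the algorithm of \cite{LRcusps}, applied to a flat $(m-1)$-manifold whose holonomy preserves a positive definite rational form $f$, produces an arithmetic orbifold in the commensurability class associated to $\SO(f\oplus\langle 1,-1\rangle,\mathbb{Z})$. The point of using the Im--Kim and Generalized Hantzsche--Wendt manifolds is that their holonomy representations are \emph{diagonal}, so $f$ may be chosen to be an arbitrary positive definite diagonal form. Since cusped arithmetic hyperbolic $m$-manifolds are commensurable if and only if their associated quadratic forms are projectively equivalent \cite{MA}, the proof reduces to showing that every rational form $q$ of signature $(m,1)$ is projectively equivalent to some $q'\oplus\langle 1,-1\rangle$ with $q'$ positive definite of rank $m-1$; this is done by prescribing the discriminant $-d$ and Hasse--Witt invariants $h_p(-1,-d)_p$ for $q'$ and invoking Serre's existence theorem (Lemma \ref{lemma:serre}), then verifying the invariants of $q'\oplus\langle 1,-1\rangle$ match those of $q$. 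Without this quadratic-forms argument---entirely absent from your proposal---you can at best produce \emph{some} commensurability classes containing $M_1$ and $M_2$ (i.e., the weaker Theorem \ref{theorem_main}), not every class as the statement requires.
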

	
The proof of Theorem \ref{theorem_arithmetic} uses the methods of \cite{LRcusps} as well as the more recent ideas of \cite{Sell}, and it is perhaps helpful to pick out a key idea in the proof which allows one to quickly prove the weaker statement contained in the following theorem.
	
	\begin{theorem}
		\label{theorem_main}
		Let $n$ be a positive integer.  For every $m \geq 2n+2\geq 6$, 
		there exist infinitely many finite volume cusped orientable hyperbolic $m$-manifolds $M_1$ and $M_2$ that satisfy the conclusions of Theorem \ref{theorem_arithmetic}.
	\end{theorem}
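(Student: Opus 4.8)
The plan is to realize suitable flat $(m-1)$-manifolds as cusp cross-sections of orientable hyperbolic $m$-manifolds and to read off the Stiefel-Whitney data of the ambient manifold directly from the cusp. The key observation is a restriction principle. If $N$ is a flat $(m-1)$-manifold appearing as a cusp cross-section of a finite volume hyperbolic $m$-manifold $M$, with inclusion $i\colon N\hookrightarrow M$, then a neighbourhood of the cusp is diffeomorphic to $N\times[0,\infty)$, so the normal bundle of $N$ is trivial and $i^*TM\cong TN\oplus\epsilon^1$. Since Stiefel-Whitney classes are stable and natural, $i^*w_{2j}(M)=w_{2j}(N)$ for all $j$, and, using naturality of the integral Bockstein $\tilde\beta$ together with $W_3=\tilde\beta(w_2)$, $i^*W_3(M)=W_3(N)$. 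Consequently $w_{2j}(N)\neq 0$ forces $w_{2j}(M)\neq 0$, and $W_3(N)\neq 0$ forces $W_3(M)\neq 0$, the latter obstructing any $\spin^\mathbb{C}$ structure on $M$. This reduces the theorem to producing appropriate flat cusp cross-sections.

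First I would construct the flat models. For $M_1$, fix an orientable flat $4$-manifold $Y$ with $w_2(Y)\neq 0$; such $Y$ exist (cf. \cite{PS}), and $4$ is minimal since orientable flat manifolds of dimension $\leq 3$ are parallelizable. Set $N_1=Y^{\times k}\times T^{r}$ with $k=\lfloor n/2\rfloor$ and $r=(m-1)-4k$, where $4\lfloor n/2\rfloor\leq 2n\leq m-1$ guarantees $r\geq 0$. The torus factor is parallelizable, so $w(N_1)=w(Y^{\times k})$, and by the Künneth formula over $\mathbb{Z}/2$ the class $\mathrm{pr}_{1}^*w_2(Y)\cdots\mathrm{pr}_{j}^*w_2(Y)$ is a nonzero summand of $w_{2j}(N_1)$ for every $1\leq j\leq k$ (it occupies a multidegree in which no other term lands). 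Hence $w_{2j}(N_1)\neq 0$ for all $0\leq 2j\leq n$. For $M_2$, fix an orientable flat manifold $Z$ of dimension $5$ with $W_3(Z)\neq 0$ (such flat manifolds necessarily have dimension $\geq 5$, since every orientable $4$-manifold is $\spin^\mathbb{C}$ \cite{TV}), and set $N_2=Z\times T^{\,(m-1)-5}$, which is defined because $m-1\geq 5$ and satisfies $W_3(N_2)=W_3(Z)\neq 0$.

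Next I would realize $N_1$ and $N_2$ as cusp cross-sections. By the cusped-manifold constructions of \cite{LRcusps}, each $N_i$ is diffeomorphic to a cusp cross-section of a finite volume hyperbolic $m$-manifold $M_i$, which can be arranged to be orientable. Applying the restriction principle to the cusp modeled on $N_i$ gives $w_{2j}(M_1)\neq 0$ for all $0\leq 2j\leq n$ and $W_3(M_2)\neq 0$, so $M_2$ admits no $\spin^\mathbb{C}$ structure; we may take $M_1\neq M_2$, for instance of different volume. To obtain infinitely many examples I would pass to finite covers of $M_i$ in which the chosen cusp lifts homeomorphically: for infinitely many $\ell$ there are surjections $\pi_1(M_i)\to\mathbb{Z}/\ell$ vanishing on the peripheral subgroup of that cusp, and standard covering-space arguments then yield pairwise non-homeomorphic covers each still carrying a cusp cross-section diffeomorphic to $N_i$, to which the restriction principle again applies.

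The main obstacle is the realization step, and specifically ensuring that the Stiefel-Whitney non-vanishing genuinely survives. The subtlety is that \cite{LRcusps} naturally produces hyperbolic orbifolds, and the passage to a manifold cover may replace the cusp cross-section $N_i$ by a finite cover of it; by Sullivan's theorem \cite{Su} such covers can annihilate all higher Stiefel-Whitney classes, so one cannot be cavalier about which flat manifold actually appears as the cusp. The crux of the argument, and where the ideas of \cite{Sell} enter even for this weaker statement, is to control the manifold covers so that the cusp cross-section is exactly $N_i$, or at least a cover on which $w_{2j}(N_i)$ and $W_3(N_i)$ remain nonzero. Once the correct flat cusp is guaranteed, the cohomological conclusions follow immediately from the restriction principle above.
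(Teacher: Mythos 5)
Your skeleton is the same as the paper's (flat cross-sections with bad characteristic classes, plus the restriction principle of Remark \ref{subman}), and your cohomological input is sound: the K\"unneth argument for $N_1=Y^{\times k}\times T^r$ is a correct, more elementary replacement for the Im--Kim manifolds \cite{ImKim} that the paper actually uses, and obstructing $\spin^\mathbb{C}$ via $W_3=\tilde\beta(w_2)$ is equivalent to the paper's route through Lemma \ref{restrict_spinC}. But there is a genuine gap exactly where you flag ``the main obstacle'': the realization step is never carried out. As you yourself observe, \cite{LRcusps} alone produces hyperbolic \emph{orbifolds}, and a manifold cover of such an orbifold may contain only a finite cover of $N_i$ as a cusp cross-section, which by \cite{Su} could have vanishing classes; you defer the resolution to ``the ideas of \cite{Sell}'', but \cite{Sell} is not the missing ingredient (in the paper it enters only for the commensurability control of Theorem \ref{theorem_arithmetic}). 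What closes the gap is McReynolds' peripheral separability theorem \cite{McR}, which upgrades \cite{LRcusps} to the statement that every closed flat $(m-1)$-manifold is \emph{itself} a cusp cross-section of infinitely many finite-volume orientable hyperbolic $m$-manifolds. Without this (or an equivalent separability argument) your proposal does not produce even one manifold $M_i$ with the required cusp.

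Two further points. First, the existence of your $Z$ --- an orientable flat $5$-manifold with $W_3(Z)\neq 0$ --- is asserted, not proved: your parenthetical only shows that dimension $5$ would be minimal. Existence is a nontrivial theorem; such $Z$ are the Generalized Hantzsche--Wendt manifolds of \cite{RS}, and their failure to admit a $\spin^\mathbb{C}$ structure in dimension $>3$ is the main result of \cite{LPS} (Theorem \ref{LPS_Theorem} in the paper). Without that input the $M_2$ half of your argument has no starting point. Second, your mechanism for ``infinitely many'' --- surjections $\pi_1(M_i)\to\mathbb{Z}/\ell$ vanishing on the peripheral subgroup --- need not exist, since the image of the peripheral subgroup in $H_1(M_i;\mathbb{Z})$ can a priori have finite index (this already happens for knot complements in dimension $3$). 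The paper instead obtains infinitely many examples from the separability argument of \cite{LRcusps} and \cite{McR}: the peripheral subgroup is separable and of infinite index, hence is contained in infinitely many distinct finite-index subgroups, each of which yields a cover in which the given cusp cross-section survives unchanged, so the restriction principle still applies.
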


By way of contrast, Whitney \cite{Wh} showed that if $M$ is a compact orientable $4$-manifold then $w_3(M)=0$, and Massey \cite{Mas} proved more generally that if $M$ is an even 
dimensional compact orientable $n$-manifold then $w_{n-1}(M)=0$. He also showed in \cite{Mas} that if $n=4k+3$ and $M$ is a compact orientable $n$-manifold, then $w_n(M)=w_{n-1}(M)=w_{n-2}(M)=0$.
	
The remainder of this paper is organized as follows. In \S \ref{SW_classes}, we will give some relevant background about Stiefel-Whitney classes and in \S \ref{spinC} discuss their connection to $\spin^\mathbb{C}$ structures.
In \S \ref{flat} we provide some background 
on flat manifolds, particularly those described in \cite{ImKim} and \cite{LPS}, which we then use in \S \ref{proof_main} to prove Theorem \ref{theorem_main}.  In \S \ref{nonvanish_Arith} we prove Theorem \ref{theorem_arithmetic}, which requires some background from quadratic forms and arithmetic hyperbolic manifolds. Finally in \S \ref{final} we provide other corollaries of our work.\\[\baselineskip]
\noindent{\bf Acknowledgement:}~{\em The first author would like to thank Bruno Martelli for many interesting and helpful discussions concerning the vanishing and non-vanishing of Stiefel-Whitney classes in the context of finite volume hyperbolic manifolds. Both authors are grateful to him for pointing out the examples of \cite{LPS} and suggesting our arguments could be extended to these examples.}

	\section{Stiefel-Whitney classes}
	\label{SW_classes}
	The Stiefel-Whitney classes $w_i$ are characteristic classes defined for a rank $k$ vector bundle $E$ over a manifold $M$, and are elements of the cohomology group $H^i(M, \mathbb{Z}/2\mathbb{Z})$.  A key property of the non-vanishing of Stiefel-Whitney classes is  that they can be used to obstruct sets of everywhere linearly independent sections of $E$; namely, if  
	$w_i(E) \neq 0$, then the vector bundle $E$ does not have $ k-i+1 $ everywhere linearly independent sections.  
	For convenience, we recall the definition of Stiefel-Whitney classes, which can be defined concisely in an axiomatic way as follows.
	
	\begin{defn}[Stiefel-Whitney classes {\cite[Ch. 4]{MilStash}}]
		The Stiefel-Whitney classes of a rank $k$ vector bundle $E$ over $M$ are the unique set of elements $w_i(E) \in H^i(M, \mathbb{Z}/2\mathbb{Z})$ satisfying the following axioms.
		\begin{enumerate}
			\item $w_0(E) = 1$ and $w_i(E) = 0$ for all $i > k$.
			\item For a manifold $M'$ and any continuous map $f:M^\prime\rightarrow M$, $w_i(f^\ast(E)) = f^\ast(w_i(E))$, where $f^\ast(E)$ is the pullback vector bundle.
			\item Let $E$ and $E^\prime$ be vector bundles over the base manifold $M$.  Then
			\begin{equation*}
				w_i(E \oplus E^\prime) = \sum_{j=0}^i w_j(E) \cup w_{i-j}(E^\prime)~~\hbox{(The Whitney Product Formula)}
			\end{equation*}
			where $\cup$ is the cup product.
			\item The M\"obius bundle $ E_{M\ddot{o}bius} $ over $ S^1 $ satisfies $w_1(E_{M\ddot{o}bius}) \neq 0$.
		\end{enumerate}
	\end{defn}
	
	In this paper, we will mainly be interested in the case where the vector bundle is the tangent bundle $TM$ and, as usual, we denote by $w_i(M)$ the Stiefel-Whitney classes of $TM$, and refer to these as {\em the Stiefel-Whitney classes of $M$}.

\begin{remark} 
\label{subman}
Suppose that $i: N\hookrightarrow M$ is an embedding of a submanifold of co-dimension one in the manifold $M$ with trivial normal bundle. Using the axioms of Stiefel-Whitney classes described above,  we find that $ i^\ast w_{2j}(M) = w_{2j}(N)$ (see also  \cite{Stong}). In particular this implies that if $M$ has a spin structure, then so does $N$.

More generally, this can be used to
justify a claim made in \S \ref{intro}. Recall that a $n$-manifold is called {\em stably parallelizable} if there exists a trivial vector bundle $\epsilon$ over $M$ so that $TM\oplus\epsilon$ is also a trivial bundle, in which case
as above, it follows that the Stiefel-Whitney classes $w_i(M)=0$ for $i\geq 1$ (as mentioned in \S \ref{intro}).\end{remark}

\section{$\spin^\mathbb{C}$ structures}
\label{spinC}
A $\spin^\mathbb{C}$ structure on a manifold is a complex analogue of the notion of a spin structure on a manifold and can be defined as follows \cite{Gompf}: {\em A manifold $M$ admits a $\spin^\mathbb{C}$ structure
if and only if there exists a complex line bundle $L$ such that the vector bundle $TM \oplus L$ admits a spin structure; i.e. $w_2(TM \oplus L) = 0$.}
Note that if $M$ admits a spin structure then $w_2(M)=0$, hence $w_2(TM \oplus L)=0$ and so it follows that $M$ admits a $\spin^\mathbb{C}$ structure (as noted in \S \ref{intro}).

We will need the following lemma.  It appears to be well-known, but we could not locate a proof and so include one here (c.f. Remark \ref{subman}).

\begin{lemma}
\label{restrict_spinC}
Let $M$ be a manifold that admits a $\spin^\mathbb{C}$ structure, and  $N \subset M$ a submanifold of codimension 1 with trivial normal bundle.  Then $N$ admits a $\spin^\mathbb{C}$ structure.\end{lemma}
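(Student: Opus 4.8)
The plan is to follow the template of Remark~\ref{subman}, which handled honest spin structures, and simply transport the witnessing complex line bundle from $M$ down to $N$ by pullback. Write $i\colon N\hookrightarrow M$ for the inclusion. Since $M$ admits a $\spin^\mathbb{C}$ structure, by definition there is a complex line bundle $L$ over $M$ for which $TM\oplus L$ admits a spin structure, and hence $w_1(TM\oplus L)=0$ and $w_2(TM\oplus L)=0$ (recall that an orientable bundle admits a spin structure exactly when its second Stiefel-Whitney class vanishes, and that $w_1(L)=0$ for any complex line bundle). The natural candidate structure on $N$ is the pullback $L'=i^\ast L$, which is again a complex line bundle because pullback preserves the complex structure on fibers.

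The first step is the geometric input: since $N$ has codimension one in $M$ with trivial normal bundle $\nu\cong\epsilon$ (the trivial real line bundle), the tangent bundle of $M$ restricted to $N$ splits as $i^\ast TM = TM|_N \cong TN\oplus\nu \cong TN\oplus\epsilon$. Adding $L$ and pulling back then gives $i^\ast(TM\oplus L)\cong TN\oplus\epsilon\oplus L'$.

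The second step is the characteristic-class computation. By naturality (Axiom 2), $w_k\big(i^\ast(TM\oplus L)\big)=i^\ast w_k(TM\oplus L)=0$ for $k=1,2$. On the other hand, the Whitney Product Formula shows that summing with a trivial bundle leaves Stiefel-Whitney classes unchanged, so $w_k(TN\oplus\epsilon\oplus L')=w_k(TN\oplus L')$. Combining these, $w_1(TN\oplus L')=0$ and $w_2(TN\oplus L')=0$, so $TN\oplus L'$ is orientable with vanishing $w_2$ and therefore admits a spin structure. By the definition recalled in \S\ref{spinC}, applied with the complex line bundle $L'$, this is precisely the assertion that $N$ admits a $\spin^\mathbb{C}$ structure.

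I do not anticipate a serious obstacle, as every step is a direct application of the axioms together with the splitting of $TM$ along $N$. The one point I would be careful about is the treatment of $w_1$: the definition records the spin condition as $w_2=0$, which presupposes orientability, so I would track $w_1$ explicitly through the computation (rather than $w_2$ alone) to confirm that $TN\oplus L'$ really is orientable and not merely of vanishing $w_2$.
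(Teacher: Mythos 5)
Your proof is correct, and it follows the paper's proof almost step for step: you restrict $TM\oplus L$ to $N$, use naturality of Stiefel--Whitney classes under the pullback along $i$, and exploit the same splitting $TM|_N \cong TN\oplus\epsilon$ coming from the trivial normal bundle, with the same candidate line bundle $i^\ast L$ on $N$. The one place you diverge is the final step: the paper converts to actual spin structures early (concluding that $(TM\oplus L)|_N$ admits a spin structure from $w_2=0$) and then invokes the two-out-of-three property for spin structures on direct sums \cite[Ch.~II Prop.~1.15]{LM} to strip off the trivial summand $\epsilon$, whereas you stay entirely at the level of characteristic classes, using the Whitney product formula to remove $\epsilon$ and only passing to a spin structure on $TN\oplus i^\ast L$ at the very end via the criterion $w_1=w_2=0$. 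Your route is slightly more elementary and self-contained, since it needs nothing beyond the axioms recalled in \S\ref{SW_classes}; the paper's route avoids relying on the equivalence between existence of a spin structure and vanishing of $w_1$ and $w_2$, packaging the bookkeeping into the cited proposition instead. Your explicit tracking of $w_1$ is also a worthwhile refinement: the definition in \S\ref{spinC} records the spin condition only as $w_2=0$, and your check that $TN\oplus i^\ast L$ is genuinely orientable closes a gap that the paper's wording leaves implicit.
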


\begin{proof} Let $\pi :TM \oplus L\rightarrow M$ be the projection map, and denote the bundle $TM \oplus L$ restricted to $N$ by $ (TM \oplus L)_N $; i.e. the set of points $\{ (p, e) \in M \times (TM \oplus L) ~|~ p \in N, \pi(e) = p \}$.
By construction, $ (TM \oplus L)_N $ is the pullback bundle induced by the inclusion map $ i: N \hookrightarrow M $, so by the properties of Stiefel-Whitney classes described in \S \ref{SW_classes} and Remark \ref{subman} we deduce:
		\begin{equation*}
			w_2((TM \oplus L)_N) = w_2(i^\ast(TM \oplus L)) = i^\ast(w_2(TM \oplus L)) = i^\ast(0) = 0.
		\end{equation*}
		Therefore, $ (TM \oplus L)_N $ admits a spin structure.
		
Similarly, let $L_N$ (resp. $TM_N$) be the complex line bundle $L$ restricted to $N$ (resp. $TM$ restricted to $N$).  Since $N$ has codimension 1 and trivial normal bundle, $TM_N = TN \oplus \epsilon$, where $\epsilon$ is
a trivial rank 1 bundle.  Using this, we can decompose $(TM \oplus L)_N$ as follows:
		\begin{equation*}
			(TM \oplus L)_N = TM_N \oplus L_N = TN \oplus \epsilon \oplus L_N = (TN \oplus L_N) \oplus \epsilon.
		\end{equation*}
		
As a trivial bundle, $\epsilon$ admits a spin structure, and we already observed that $(TM \oplus L)_N$ admits a spin structure.  By \cite[Ch. II Prop. 1.15]{LM}, given two vector bundles $A$ and $B$ over the same 
base space, if two of $A$, $B$, and $A \oplus B$ admit a spin structure, then the third does as well.   Hence, $TN \oplus L_N$ admits a spin structure, and we deduce that $N$ admits a $\spin^\mathbb{C}$ structure.\end{proof}

\section{Flat manifolds and cusp cross-sections}
\label{flat}
Recall that if $M$ is a cusped hyperbolic $m$-manifold of finite volume, the {\em cusps} of $M$  are the ends of $M$ and all such are homeomorphic to a manifold of the form $B \times \mathbb{R}^+$, where $B$ is a closed $(m-1)$-manifold that is homeomorphic to a flat manifold (i.e. the quotient of $\mathbb{R}^{m-1}/L$ where $L$ is a discrete cocompact torsion-free group of isometries of $\mathbb{R}^{m-1}$).  We refer to $B$ as a cusp cross-section (it will not be necessary for us to be explicit about the flat metric induced by the hyperbolic metric).
	Note that by Bieberbach's third theorem on crystallographic groups (see for example \cite{Ch}), there are only finitely many closed flat manifolds up to homeomorphism in each dimension, and hence there are only finitely many possible homeomorphism classes of cusp cross-sections in each dimension.
	
In the proofs of Theorem \ref{theorem_main} and Theorem \ref{theorem_arithmetic}, we make use of certain flat manifolds constructed by Im and Kim \cite{ImKim}, as well as the so-called Generalized Hantzsche-Wendt manifolds of \cite{RS}.

\subsection{The manifolds of Im and Kim}
\label{ImKimmanifolds}
The manifolds constructed by Im and Kim \cite{ImKim} have dimension $2n+1$ and satisfy $w_{2j}(N)\neq 0$ for 
$0 \leq 2j \leq n$. We briefly recall some salient features of these manifolds and their fundamental groups that we shall make use of.

The manifolds $N$ of \cite{ImKim} are described as $\mathbb{R}^{2n+1}/ \pi_n$, where $\pi_n$ is a subgroup of $\text{Isom}(\mathbb{R}^{2n+1})$ with generators $t_1, \ldots, t_{n+1}, \tau_1, \ldots, \tau_n, K$ described in terms of translations and reflections.  These are explicitly described in matrix form below using \cite[pp 270-271]{ImKim}.

 \begin{flalign*}
		t_i: v \mapsto \left[ \begin{array}{ccccccc} 1 & & & & & & \\ & \ddots & & & & 0 & \\ & & 1 & & & & \\ & & & 1 & && \\ & & & & 1 & & \\ & 0 & & & & \ddots & \\ & & & & & & 1 \end{array} \right] v + \left[ \begin{array}{c} 0 \\ \vdots \\ 0 \\ 1 \\ 0 \\ \vdots \\ 0 \end{array} \right] & &
\end{flalign*}
\begin{flalign*}
	\tau_j: v \mapsto \left[ \begin{array}{cccccccc} 1 & & & & & & & \\ & \ddots & & & & & 0 & \\ & & -1 & & & & & \\ & & & -1 & & & & \\ & & & & \ddots & & & \\ & & & & & 1 & & \\ & 0 & & & & & \ddots & \\ & & & & & & & 1 \end{array} \right] v+ \left[ \begin{array}{c} 0 \\ \vdots \\ 0 \\ 0 \\ \vdots \\ \frac{1}{2} \\ \vdots \\ 0 \end{array} \right] & &
\end{flalign*}
\begin{flalign*}
	K: v \mapsto \left[ \begin{array}{ccccccc} 1 & & & & & & \\ & \ddots & & & & 0 & \\ & & 1 & & & & \\ & & & -1 & & & \\ & & & & -1 & & \\ & 0 & & & & \ddots & \\ & & & & & & -1 \end{array} \right] v+ \left[ \begin{array}{c} \frac{1}{2} \\ \vdots \\ \frac{1}{2} \\ \frac{1}{2} \\ 0 \\ \vdots \\ 0 \end{array} \right] & &
\end{flalign*}

The $t_i$ act by unit translation along the $i$th axis.  The $\tau_j$ reflect across the coordinate hyperplanes perpendicular to the $j$th and $(j+1)$st axes, and translate by $\frac{1}{2}$ along the $(j+1+n)$th axis.  
The action of $K$ depends on whether $n$ is odd or even: when $ n $ is odd, $K$ reflects across the last $(n+1)$st coordinate hyperplanes, and when $n$ is even, across the last $n$ coordinate hyperplanes, and then in either case, translates by $\frac{1}{2}$ along each of the first $(n+1)$ axes.  We deduce from this description that all of the generators of $\pi_n$ act on $\mathbb{R}^{2n+1}$ preserving orientation, and so each $\mathbb{R}^{2n+1}/\pi_n$ is an orientable manifold. 

\begin{remark} 
\label{rk:HW} We point out that the group $\pi_1$ is the fundamental group of the so-called Hantzsche-Wendt  $3$-manifold $\rm{HW}$ for which $w_1(\rm{HW})=w_2(\rm{HW})=0$ and this is not covered by the range of Stiefel-Whitney classes provided by  \cite{ImKim}. \end{remark}	

\subsection{Generalized Hantzsche-Wendt manifolds}
\label{GenHW}
Following \cite{RS}, we call an orientable $n$-dimensional flat manifold a {\em Generalized Hantzsche-Wendt manifold} if and only if its holonomy group is an elementary abelian $2$-group of rank $n-1$. Note that a 
Generalized Hantzsche-Wendt manifold occurs as a manifold as constructed by Im and Kim in \S \ref{ImKimmanifolds} if and only the manifold is the classical Hantzsche-Wendt  $3$-manifold $\rm{HW}$ of Remark \ref{rk:HW}. 
Generalized Hantzsche-Wendt manifolds have several interesting properties, for example they are all rational homology $n$-spheres \cite{Sz}.  However, all that we will need is that their holonomy group is diagonal, 
consisting of products of matrices \cite{LPS}:

 \begin{flalign*}
		C_i =  \left[ \begin{array}{ccccccc} 1 & & & & & & \\ & \ddots & & & & 0 & \\ & & 1 & & & & \\ & & & -1 & && \\ & & & & 1 & & \\ & 0 & & & & \ddots & \\ & & & & & & 1 \end{array} \right] 
\hbox{with -1 occurring in the $ i $'th position,} \end{flalign*}

\noindent and the following proved in \cite{LPS}.

\begin{theorem}
\label{LPS_Theorem}
Let $N$ be a Generalized Hantzsche-Wendt manifold of dimension $>3$. Then $N$ does not admit a $\spin^\mathbb{C}$ structure.\end{theorem}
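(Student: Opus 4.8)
The plan is to use the standard fact that a manifold $N$ admits a $\spin^\mathbb{C}$ structure if and only if the third integral Stiefel--Whitney class $W_3(N) := \tilde\beta(w_2(N)) \in H^3(N;\mathbb{Z})$ vanishes, where $\tilde\beta$ is the Bockstein of $0\to\mathbb{Z}\xrightarrow{2}\mathbb{Z}\to\mathbb{Z}/2\to 0$; this is just a reformulation of the definition in \S\ref{spinC}, since the obstruction to finding a line bundle $L$ with $w_2(TN\oplus L)=0$ is precisely $W_3$. Thus the whole task becomes to show $W_3(N)\neq 0$. One subtlety should be flagged at the outset: because $N$ is orientable, the mod $2$ reduction of $W_3$ is $\rho(W_3)=Sq^1 w_2 = w_1 w_2 + w_3 = w_3$, and by Massey's theorem quoted above $w_3$ already vanishes when $\dim N=4$. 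Hence the naive hope ``$w_3\neq 0$'' fails, and $W_3$ must be detected as a genuinely integral ($2$-torsion) class. This is what makes the argument delicate and is where the hypothesis $\dim N>3$ does real work (for $\dim N=3$ the manifold is the spin manifold $\mathrm{HW}$ of Remark \ref{rk:HW}, so $W_3=0$).

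Next I would exploit the diagonal flat structure to pin down $w_2$. Since $N$ is flat with diagonal holonomy $G\cong(\mathbb{Z}/2)^{n-1}$, the tangent bundle splits as $TN\cong\bigoplus_{i=1}^n L_i$, a sum of real line bundles, where $L_i$ is pulled back from the $i$th sign character $\chi_i\in H^1(G;\mathbb{Z}/2)$ along the classifying map $N\to BG$; write $a_i=w_1(L_i)$. Orientability gives $\sum_i a_i=0$, and the Whitney formula yields $w(TN)=\prod_{i=1}^n(1+a_i)$, so $w_2(N)=\sum_{i<j}a_i a_j$ is the inflation of $e_2(\chi):=\sum_{i<j}\chi_i\chi_j\in H^2(G;\mathbb{Z}/2)$. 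By naturality of $\tilde\beta$ it follows that $W_3(N)=\mathrm{infl}\big(\tilde\beta_G\,e_2(\chi)\big)$, where $\mathrm{infl}\colon H^3(G;\mathbb{Z})\to H^3(\Gamma;\mathbb{Z})$ is inflation for $1\to\mathbb{Z}^n\to\Gamma\to G\to 1$, with $\Gamma=\pi_1(N)$ and $N$ a $K(\Gamma,1)$.

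The proof then breaks into two computations. First I would check that $\tilde\beta_G\,e_2(\chi)$ is nonzero in $H^3\big((\mathbb{Z}/2)^{n-1};\mathbb{Z}\big)$ using the well-understood integral cohomology of elementary abelian $2$-groups; concretely its reduction is $Sq^1 e_2(\chi)=e_3(\chi)$, and the coefficient of the monomial $\chi_1^2\chi_2$ is seen to be $1$ for every $n\geq 4$, so the $G$-level class is nonzero. Second --- and this is the main obstacle --- I must show that inflation does \emph{not} kill this class in $H^3(\Gamma;\mathbb{Z})$. This is genuinely nontrivial: the $\dim N=4$ case shows that $\ker(\mathrm{infl})$ is nonzero already on $e_3(\chi)$ mod $2$ (that is exactly why $w_3(N)$ can vanish), so the integral class must be tracked through the Lyndon--Hochschild--Serre spectral sequence of $1\to\mathbb{Z}^n\to\Gamma\to G\to1$. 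Here the decisive input is the explicit Generalized Hantzsche--Wendt structure: the $G$-action on $H^\ast(\mathbb{Z}^n;\mathbb{Z})$ is by the diagonal $\pm1$ matrices $C_i$ (nontrivial over $\mathbb{Z}$, although trivial mod $2$), and the transgressions are governed by the half-integer translation parts of the generators. I expect the heart of the work to be computing these differentials and confirming that the surviving integral class is nonzero, crucially using $n\geq 4$.

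As an alternative route for this last step --- and probably cleaner bookkeeping --- I would recast $\spin^\mathbb{C}$-ness as the lifting problem for the holonomy $\rho\colon\Gamma\to\SO(n)$ through the central extension $1\to U(1)\to\spin^\mathbb{C}(n)\to\SO(n)\to1$, whose obstruction is exactly $\rho^\ast W_3$. Because $\rho$ has the very explicit diagonal form above and $\Gamma$ has explicit generators and relations, the obstruction can be written as a concrete $2$-cocycle and the nonexistence of a lift checked by a finite $\mathbb{F}_2$-linear computation twisted by the $U(1)$-factor. In every formulation the crux is the same: one must rule out \emph{every} integral lift of $w_2$, equivalently show $w_2(N)\notin\mathrm{im}\big(H^2(N;\mathbb{Z})\xrightarrow{\rho}H^2(N;\mathbb{Z}/2)\big)$. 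The fact (quoted above, \cite{Sz}) that these manifolds are rational homology $n$-spheres, so that $H^\ast(N;\mathbb{Z})$ is pure $2$-torsion in the relevant range, is what keeps that image small enough for the obstruction to persist.
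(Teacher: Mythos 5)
Your proposal cannot be compared line-by-line with a proof in the paper, because the paper contains none: Theorem \ref{LPS_Theorem} is imported wholesale from \cite{LPS}, so what you are attempting is in effect a reproof of the main theorem of that reference. Your setup is correct and sensible --- the reformulation of $\spin^\mathbb{C}$-ness as the vanishing of $W_3(N)=\tilde\beta(w_2(N))$, the splitting $TN\cong\bigoplus_i L_i$ coming from the diagonal holonomy, the identity $w_2(N)=\sum_{i<j}a_ia_j$ with $a_i=w_1(L_i)$ inflated from characters of $G$, and the observation that $w_1=0$ forces the obstruction to be detected integrally. But the proposal stops exactly where the theorem begins. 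The entire content of the statement is the step you yourself label ``the main obstacle'': showing that the inflation map $H^3(G;\mathbb{Z})\to H^3(\Gamma;\mathbb{Z})$ does not annihilate $\tilde\beta_G\,e_2(\chi)$. For that step you provide no computation at all, only the expectation that a Lyndon--Hochschild--Serre analysis (or, in your alternative route, an unspecified finite cocycle computation) will confirm nonvanishing ``crucially using $n\geq 4$.'' A plan with the decisive computation deferred is not a proof.

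The gap is not a routine verification, and the $n=3$ case shows why. For $n=3$ one has $G\cong(\mathbb{Z}/2)^2$ with $\chi_3=\chi_1+\chi_2$, and $Sq^1e_2(\chi)=e_3(\chi)=\chi_1^2\chi_2+\chi_1\chi_2^2\neq 0$ in $H^3(G;\mathbb{Z}/2)$, so $\tilde\beta_G\,e_2(\chi)\neq 0$ at the group level --- yet the Hantzsche--Wendt $3$-manifold of Remark \ref{rk:HW} is spin, hence $\spin^\mathbb{C}$, so inflation kills this class for $n=3$. Thus the $G$-level nonvanishing you do sketch can never suffice; everything hinges on an analysis of the extension $1\to\mathbb{Z}^n\to\Gamma\to G\to 1$ (equivalently, of the half-integral translational parts of the GHW group) that separates $n>3$ from $n=3$, and that analysis is precisely what is missing. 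Two smaller points: Generalized Hantzsche--Wendt manifolds exist only in odd dimensions (as the paper notes), so motivating the difficulty via Massey's theorem in dimension $4$ is beside the point --- though your underlying worry, that one cannot simply hope $w_3\neq 0$, itself requires justification in odd dimensions rather than assertion; and the closing appeal to the rational-homology-sphere property \cite{Sz} does not by itself constrain the image of $H^2(N;\mathbb{Z})\to H^2(N;\mathbb{Z}/2)$, since torsion classes reduce nontrivially in general.
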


As noted in \cite{LPS}, Generalized Hantzsche-Wendt manifolds only exist in odd dimensions.

\section{Proof of Theorem \ref{theorem_main}}
	\label{proof_main}	
For both of the manifolds $M_1$ and $M_2$ stated in Theorem \ref{theorem_main}, the results will be proved by breaking the statement into two cases, depending on whether the dimension of the hyperbolic manifold is even or odd.
We deal with the case of constructing manifolds with non-vanishing Stiefel-Whitney classes in detail and comment on what is needed to tweak the argument to deal with $\spin^\mathbb{C}$ structures.
	
\begin{prop}
\label{evens}
Let $n\geq 2$ be a positive integer.  There exist infinitely many finite volume cusped orientable hyperbolic $(2n+2)$-manifolds $M$ such that $w_{2j}(M)\neq 0$ for all $0 \leq 2j \leq n$.
\end{prop}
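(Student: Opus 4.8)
The plan is to realize the Im-Kim flat $(2n+1)$-manifolds $N$ of \S\ref{ImKimmanifolds} as cusp cross-sections of cusped hyperbolic $(2n+2)$-manifolds, and then to read off the non-vanishing of $w_{2j}(M)$ directly from that of $w_{2j}(N)$. The mechanism linking the two is Remark~\ref{subman}: a cusp cross-section $N$ sits inside $M$ as a slice $N \times \{t\}$ of a cusp neighborhood $N \times \mathbb{R}^+$, so it is an embedded codimension-one submanifold with trivial (collar) normal bundle. Hence if $i : N \hookrightarrow M$ denotes this inclusion, then $i^\ast w_{2j}(M) = w_{2j}(N)$ for all $j$. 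Since the Im-Kim manifold $N = \mathbb{R}^{2n+1}/\pi_n$ satisfies $w_{2j}(N) \neq 0$ for all $0 \leq 2j \leq n$ by \cite{ImKim}, the pullbacks $i^\ast w_{2j}(M)$ are non-zero, and therefore so are $w_{2j}(M)$ in the same range. Note that the dimensions match perfectly: a cusp cross-section of a $(2n+2)$-manifold has dimension $2n+1$, which is exactly the dimension of $N$, so the even-dimensional case is the one in which the Im-Kim manifolds can be used without modification. The crux of the argument is thus purely geometric: produce a cusped hyperbolic $(2n+2)$-manifold having $N$ as a cusp cross-section.

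For the cusp realization I would invoke the method of Long and Reid \cite{LRcusps}. The essential input their construction requires is that the holonomy of the flat manifold be realized by a finite group of matrices preserving a lattice and a positive-definite integral quadratic form; one then builds an indefinite form $q$ of signature $(2n+2,1)$ whose arithmetic group $\Or(q,\mathbb{Z})$ has a cusp whose stabilizer contains a copy of the Bieberbach group $\pi_n$. The Im-Kim manifolds are especially well suited to this: reading off the matrices in \S\ref{ImKimmanifolds}, the linear (holonomy) parts of all the generators $t_i, \tau_j, K$ are diagonal with entries $\pm 1$, so the holonomy already lies in $\Or(2n+1,\mathbb{Z})$ and preserves the standard lattice $\mathbb{Z}^{2n+1}$ and the standard form $x_1^2 + \cdots + x_{2n+1}^2$. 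This makes the arithmetic set-up of \cite{LRcusps} directly applicable and produces a finite-volume cusped arithmetic hyperbolic $(2n+2)$-orbifold $\mathcal{O}$ having $N$ among its cusp cross-sections.

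To upgrade the orbifold $\mathcal{O}$ to a genuine manifold whose cusp cross-section is still equal to $N$ (rather than a proper cover of $N$), I would use peripheral separability, following the more recent ideas of \cite{Sell}: one finds a torsion-free finite-index subgroup $H$ of $\pi_1^{\mathrm{orb}}(\mathcal{O})$ that contains the peripheral subgroup $\pi_n$, so that the induced cover is trivial over the chosen cusp and the cross-section there is exactly $N$. Separability of $\pi_n$ does double duty here: since $\pi_n$ is a fixed finitely generated subgroup of infinite index, the finite-index subgroups containing it cannot all have bounded index (otherwise their intersection would be a finite-index subgroup equal to $\pi_n$ by separability, contradicting infinite index), so one in fact obtains a tower of covers of unbounded index, each trivial over the distinguished cusp and hence each retaining $N$ as a cusp cross-section. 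Orientability comes for free: the orientation character $\pi_1 \to \mathbb{Z}/2\mathbb{Z}$ classifies $w_1$, and its restriction to $\pi_n$ is $i^\ast w_1 = w_1(N) = 0$ because $N$ is orientable (as noted in \S\ref{ImKimmanifolds}); thus $\pi_n$ lies in the orientation subgroup, so passing to the orientation double cover is again trivial over that cusp and preserves $N$. Combining these, each manifold $M$ in the resulting tower is orientable, has $N$ as a cusp cross-section, and hence satisfies $w_{2j}(M) \neq 0$ for $0 \leq 2j \leq n$; since the covers have unbounded volume they are pairwise non-homeomorphic, giving infinitely many examples.

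The main obstacle is the passage from orbifold to manifold while keeping the cusp cross-section equal to $N$. The orbifold statement is essentially the content of \cite{LRcusps}, but the torsion of $\Or(q,\mathbb{Z})$ located away from the cusp must be removed by a finite-index subgroup that nonetheless contains \emph{all} of $\pi_n$; arranging this is exactly where peripheral separability, and hence the input from \cite{Sell}, is needed. By contrast, once $N$ has been realized as a cusp cross-section, the deduction of the non-vanishing Stiefel-Whitney classes via Remark~\ref{subman}, together with the orientability and infinitude of the family, is routine.
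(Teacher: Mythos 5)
Your proposal is correct and follows essentially the same route as the paper: realize the Im--Kim manifold $N$ as a cusp cross-section of infinitely many orientable cusped hyperbolic $(2n+2)$-manifolds via \cite{LRcusps} together with a peripheral-separability upgrade from orbifolds to manifolds, then read off $w_{2j}(M)\neq 0$ from $w_{2j}(N)\neq 0$ using Remark~\ref{subman}. The only discrepancy is bibliographic: the orbifold-to-manifold upgrade you attribute to \cite{Sell} is precisely the peripheral-separability theorem of McReynolds \cite{McR}, which is what the paper cites for this step.
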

	
\begin{proof} We will make use of \cite{LRcusps} and the upgrade provided by \cite{McR}, which taken together proves that every closed, flat $(m-1)$-manifold occurs as a cusp cross-section in some cusped finite volume hyperbolic $m$-manifold (indeed of infinitely many such manifolds). In particular, the manifolds $N$ of \cite{ImKim} described in \S \ref{flat} occur as the cusp cross-section of infinitely many cusped finite volume hyperbolic $(2n+2)$-manifolds $M$.
As noted in \S \ref{flat}, $N$ is orientable and \cite{LRcusps} shows that the hyperbolic manifolds $M$ can be taken to be orientable.

Since $N$ embeds in $M$ as a cusp cross-section, the normal bundle of $i: N \hookrightarrow M$ is trivial.  Since $w_{2j}(N)\neq 0$, we deduce from Remark \ref{subman} that 
$w_{2j}(M)\neq 0$ in the range stated. \end{proof}
	
\begin{prop}
\label{odds}
Let $n \geq 2$ be a positive integer.  There exist infinitely many finite volume cusped orientable hyperbolic $(2n+3)$-manifolds $M$ such that $w_{2j}(M)\neq 0$ for all $0 \leq 2j \leq n$.\end{prop}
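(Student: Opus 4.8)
The plan is to follow the proof of Proposition \ref{evens} essentially verbatim, replacing the Im--Kim manifold by an even-dimensional flat manifold built from it. Because the realization theorem of \cite{LRcusps} (as upgraded by \cite{McR}) applies to \emph{every} closed flat manifold, the proposition follows once I produce a single closed orientable flat $(2n+2)$-manifold $B$ with $w_{2j}(B)\neq 0$ for all $0\leq 2j\leq n$: such a $B$ is then the cusp cross-section of infinitely many cusped finite volume orientable hyperbolic $(2n+3)$-manifolds $M$, and since the cross-section has trivial normal bundle, Remark \ref{subman} gives $w_{2j}(M)\neq 0$ in the stated range.

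To construct $B$, first I would take the Im--Kim manifold $N$ of dimension $2n+1$ from \S \ref{ImKimmanifolds}, which is orientable and satisfies $w_{2j}(N)\neq 0$ for $0\leq 2j\leq n$, and set $B=N\times S^1$. Writing $N=\mathbb{R}^{2n+1}/\pi_n$ and $S^1=\mathbb{R}/\mathbb{Z}$, the product is $\mathbb{R}^{2n+2}/(\pi_n\times\mathbb{Z})$ with the group acting diagonally by isometries, so $B$ is a closed orientable flat $(2n+2)$-manifold and is thus an admissible cusp cross-section.

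Next I would verify that $B$ retains the required non-vanishing classes. Since $T(N\times S^1)\cong p_1^\ast(TN)\oplus p_2^\ast(TS^1)$ for the two projections $p_1,p_2$, and $S^1$ is parallelizable so that $w(TS^1)=1$, the Whitney Product Formula gives $w(B)=p_1^\ast(w(N))$; in particular $w_{2j}(B)=p_1^\ast(w_{2j}(N))$. The projection $p_1$ admits a section (fix a point of $S^1$), so $p_1^\ast$ is injective on $\mathbb{Z}/2\mathbb{Z}$-cohomology; equivalently, by the K\"unneth formula $p_1^\ast$ identifies $H^{2j}(N;\mathbb{Z}/2\mathbb{Z})$ with the direct summand $H^{2j}(N)\otimes H^0(S^1)$ of $H^{2j}(B;\mathbb{Z}/2\mathbb{Z})$. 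Hence $w_{2j}(B)\neq 0$ for all $0\leq 2j\leq n$.

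Finally I would apply \cite{LRcusps} and \cite{McR} to realize $B=N\times S^1$ as a cusp cross-section of infinitely many cusped finite volume orientable hyperbolic $(2n+3)$-manifolds $M$, and conclude via Remark \ref{subman} exactly as in Proposition \ref{evens}. The only step beyond the even-dimensional argument is the Stiefel-Whitney computation for the product $N\times S^1$, and I expect this to present no genuine difficulty: crossing with a parallelizable factor leaves the total Stiefel-Whitney class unchanged up to the injective pullback $p_1^\ast$, so no class in the relevant range is destroyed. The one thing worth double-checking is that the orientability of $M$ provided by \cite{LRcusps} is unaffected by the new cross-section, which holds since $B$ is itself orientable.
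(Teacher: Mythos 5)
Your proposal is correct and takes essentially the same approach as the paper: both use $N\times S^1$ as the closed orientable flat $(2n+2)$-dimensional cusp cross-section, realize it inside infinitely many orientable hyperbolic $(2n+3)$-manifolds via \cite{LRcusps} and \cite{McR}, and conclude with Remark \ref{subman}. The only cosmetic difference is in verifying $w_{2j}(N\times S^1)\neq 0$: you pull back along the projection $p_1$ and use injectivity of $p_1^\ast$, while the paper restricts along the inclusion $N\hookrightarrow N\times S^1$ and applies Remark \ref{subman} a second time; these are equivalent one-line computations.
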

	
\begin{proof}
Consider the closed orientable flat $(2n+1)$-manifolds $N$ described in \S \ref{flat},  and used in the proof of Proposition \ref{evens}.  Then $N \times S^1$ is a closed orientable flat $(2n+2)$-manifold for which the embedding $N\hookrightarrow N \times S^1$ has trivial normal bundle. Hence, using Remark \ref{subman}, we deduce that the Stiefel-Whitney classes $w_{2j}(N \times S^1)\neq 0$ for all $0 \leq 2j \leq n$. Again using \cite{LRcusps} and \cite{McR} we can construct infinitely many finite volume cusped orientable hyperbolic $(2n+3)$-manifolds $M$ with cusp cross-section $N \times S^1$.  As in Proposition \ref{evens} we can then deduce that $w_{2j}(M)\neq 0$ in the range stated.\end{proof}
	
\noindent The first part of Theorem \ref{theorem_main} now follows from Proposition \ref{evens} or Proposition \ref{odds} depending on $m$ being even or odd.  \qed
\begin{remark} In Theorem \ref{theorem_main}, the case of $m=5$ can be also handled, since in \cite{LRspin}, a flat $4$-manifold without a spin structure (i.e. $w_2\neq 0$) is embedded as a cusp cross-section of a cusped hyperbolic $5$-manifold.\end{remark}
	
To deal with the case of $\spin^\mathbb{C}$ structures, we first assume that $m=2n+2$ so that $m-1$ is odd. In this case, if $N$ is a Generalized Hantzsche-Wendt manifold of dimension $m-1$,  
applying \cite{LRcusps} and \cite{McR} once again shows that $N$ occurs as the cusp cross-section of infinitely many cusped orientable finite volume hyperbolic $(2n+2)$-manifolds $M$. An application of Theorem \ref{LPS_Theorem}
and Lemma \ref {restrict_spinC} now implies that such a manifold $M$ does not admit a $\spin^\mathbb{C}$ structure.

As in the case of Proposition \ref{odds}, to deal with the case $m=2n+3$, we consider the flat manifold $N \times S^1$  where in this case, $N$ is the Generalized Hantzsche-Wendt manifold given above. By Lemma \ref {restrict_spinC},
$N\times S^1$ also does not admit a $\spin^\mathbb{C}$ structure and we then argue as above.\qed

\begin{remark} Unlike the case of non-vanishing of Stiefel-Whitney classes, the case of $m=5$ cannot be handled, since as noted in \S \ref{intro}, all $4$-manifolds admit a $\spin^\mathbb{C}$ structure.\end{remark}

	\section{Commensurability classes of arithmetic manifolds}
	\label{nonvanish_Arith}
	The main result of this section is the more precise version of Theorem \ref{theorem_main} stated as Theorem \ref{theorem_arithmetic} and repeated below for convenience. The key point is that the constructions of \cite{LRcusps} and \cite{McR} exploits arithmetic manifolds which we can leverage here to prove a stronger result in the arithmetic setting.
	We will provide some background and necessary definitions in subsequent subsections.
	
	\begin{theorem}
		\label{arithmeticSW}
Let $n$ be a positive integer.  For every $m \geq 2n+2\geq 6$, every cusped arithmetic hyperbolic $m$-manifold is commensurable with orientable hyperbolic $m$-manifolds $M_1$ and $M_2$ such that:
\begin{itemize}
\item $w_{2j}(M_1)\neq 0$ for all $0 \leq 2j \leq n$;
\item $M_2$ does not admit a $\spin^\mathbb{C}$ structure.
\end{itemize}	
\end{theorem}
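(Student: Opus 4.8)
The plan is to refine the existence argument behind Theorem \ref{theorem_main} (Propositions \ref{evens} and \ref{odds}) so that the flat manifold we realize as a cusp cross-section is carried by a hyperbolic manifold lying in a \emph{prescribed} commensurability class, rather than in whatever class the construction of \cite{LRcusps} happens to output. The starting point is that a finite-volume cusped (equivalently, non-cocompact) arithmetic hyperbolic $m$-manifold is necessarily of simplest type and defined over $\mathbb{Q}$, so its commensurability class is recorded by a quadratic form $f$ over $\mathbb{Q}$ of signature $(m,1)$; since $f$ has rank $m+1\geq 7$, Meyer's theorem guarantees $f$ is isotropic over $\mathbb{Q}$, consistent with the presence of cusps. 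The decisive structural observation---already visible in \S\ref{ImKimmanifolds} and \S\ref{GenHW}---is that the holonomy of both the Im--Kim manifolds and of the Generalized Hantzsche--Wendt manifolds consists of \emph{diagonal} matrices with $\pm1$ entries.

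Next I would analyze a cusp of $f$ arithmetically. Choosing a primitive isotropic vector $v\in\mathbb{Q}^{m+1}$, the stabilizer of $v$ in $\Or(f)$ is a parabolic whose unipotent radical is additively the space $W=v^\perp/\langle v\rangle$ carrying an induced positive-definite rational form $\bar f$ of rank $m-1$, while its Levi factor acts through $\Or(\bar f)$. Diagonalizing $\bar f$ over $\mathbb{Q}$ as $\langle b_1,\dots,b_{m-1}\rangle$, the key point is that any diagonal $\pm1$ matrix $C$ satisfies $C^{\mathsf T}\,\mathrm{diag}(b_i)\,C=\mathrm{diag}(b_i)$, so the diagonal $\pm1$ holonomy $H_0$ of our chosen flat manifold preserves $\bar f$ for \emph{every} class $f$. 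I would then assemble a crystallographic group $\Lambda\subset\mathrm{Stab}(v)\cap\Or(f)(\mathbb{Q})$ whose quotient $\mathbb{R}^{m-1}/\Lambda$ is the desired flat manifold: the rotational parts come from $H_0\subset\Or(\bar f)(\mathbb{Q})$, and the translations---including the half-translations $\tfrac12 e_k$ appearing in the Im--Kim and Hantzsche--Wendt presentations---are realized by rational unipotent elements of the parabolic. Because $\Lambda$ lands in $\Or(f)(\mathbb{Q})$, it is commensurable with the cusp subgroup of $\Or(f,\mathbb{Z})$, hence attached to the class of $f$.

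The hard part will be the globalization: extending this peripheral group $\Lambda$ to a finite-covolume lattice $\Gamma\subset\Or(m,1)$ for which $\Lambda$ is genuinely a cusp subgroup and for which $\Gamma$ remains contained in $\Or(f)(\mathbb{Q})$, so that the resulting manifold is commensurable with the prescribed one. This is exactly where the arithmetic inside the constructions of \cite{LRcusps} and \cite{McR}, together with the more recent techniques of \cite{Sell}, must be invoked and tracked carefully: one must verify that the lattice and the induced form $\bar f$ can be arranged in the correct genus so that the glued-up lattice is $\mathbb{Q}$-commensurable with $\Or(f,\mathbb{Z})$ rather than with some other form. The diagonalizability of $\bar f$ and the maximal flexibility of diagonal $\pm1$ holonomy are what should make this possible uniformly across all commensurability classes. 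Having produced such a $\Gamma$, passing to a torsion-free (Selberg) and then orientation-preserving finite-index subgroup yields an orientable cusped arithmetic $m$-manifold in the class of $f$ with the prescribed flat cusp cross-section.

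It then remains to choose the cusp cross-section and carry out the dimension bookkeeping exactly as in the proof of Theorem \ref{theorem_main}. For $M_1$ I would take the cross-section to be the $(2n+1)$-dimensional Im--Kim manifold $N$, padded when $m-1>2n+1$ by a flat torus factor $N\times T^{m-1-(2n+1)}$ whose holonomy is still diagonal $\pm1$; iterating Remark \ref{subman} gives $w_{2j}(M_1)\neq0$ for all $0\leq 2j\leq n$. For $M_2$ I would take a Generalized Hantzsche--Wendt manifold of odd dimension $>3$ when $m$ is even, and its product with $S^1$ when $m$ is odd; by Theorem \ref{LPS_Theorem} and Lemma \ref{restrict_spinC} this cross-section admits no $\spin^\mathbb{C}$ structure, and a final application of Lemma \ref{restrict_spinC} to the cusp inclusion shows $M_2$ admits none either. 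Since $m\geq 2n+2\geq 6$ leaves room for these factors in every parity, this covers all $m$ in the stated range.
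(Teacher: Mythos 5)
Your proposal is sound and, at the decisive point, takes a genuinely different route from the paper. Both arguments share the same skeleton: the flat manifolds in question (Im--Kim for Stiefel--Whitney classes; Generalized Hantzsche--Wendt, and products with circles or tori, for $\spin^\mathbb{C}$) have diagonal $\pm 1$ holonomy, hence preserve \emph{every} diagonal positive definite rational form $f$ of rank $m-1$, so the algorithm of \cite{LRcusps}, upgraded by \cite{McR}, realizes them as cusp cross-sections of orientable manifolds in the commensurability class of $\SO(f\oplus\langle 1,-1\rangle,\mathbb{Z})$, and the conclusions pull back from the cusp via Remark \ref{subman}, Theorem \ref{LPS_Theorem} and Lemma \ref{restrict_spinC}. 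The difference is how one shows that every commensurability class is hit. The paper works forwards: it records the invariants $(d,h_p)$ of the given form $q$, invokes Serre's lemma (Lemma \ref{lemma:serre}) to manufacture a positive definite $q'$ with discriminant $-d$ and $\epsilon_p(q')=h_p(-1,-d)_p$, and then checks by a Hilbert-symbol computation that $q'\oplus\langle 1,-1\rangle$ and $q$ share all invariants, hence are projectively equivalent. You work backwards: a cusped arithmetic manifold corresponds to an isotropic rational form, so Witt decomposition splits a hyperbolic plane off $q$ rationally, $q\cong \bar f\perp\langle 1,-1\rangle$ with $\bar f$ positive definite of rank $m-1$; rationally diagonalizing $\bar f$ and feeding it into the algorithm lands in the class of $q$ with no invariant bookkeeping at all. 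Your route is shorter and cleaner; what the paper's route buys is a statement that stays entirely inside the invariant calculus of \S\ref{quad} and applies to arbitrary forms of signature $(m,1)$ without appealing to isotropy --- a vacuous gain here, since by Meyer's theorem every such rational form of rank at least $5$ is isotropic anyway.

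Two corrective remarks. First, what you call the hard part --- extending the peripheral crystallographic group $\Lambda$ inside the parabolic to a finite-covolume lattice commensurable with $\Or(f\oplus\langle 1,-1\rangle,\mathbb{Z})$ containing $\Lambda$ as a cusp subgroup --- is not something you need to rebuild: it is precisely the content of \cite{LRcusps}, with \cite{McR} supplying the passage from orbifold to manifold. The one genuine subtlety, which the paper flags, is that the half-integral translations in the Bieberbach presentations force a rescaling that may change the form $f\oplus\langle 1,-1\rangle$, but only within its projective equivalence class, so the commensurability class is unaffected. Second, your worry about arranging ``the correct genus'' is superfluous: by \cite{MA}, commensurability of cusped arithmetic hyperbolic manifolds is governed by projective equivalence of the associated forms alone, so the rational equivalence your Witt decomposition delivers is already more than enough.
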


Note that the separability argument of \cite{LRcusps} and \cite{McR} provides, in each commensurability class of cusped arithmetic hyperbolic $m$-manifolds, infinitely many cusped orientable hyperbolic $m$-manifolds $M_1$ and $M_2$ as
in Theorem \ref{arithmeticSW}.

	\subsection{Quadratic forms}\label{quad}
	\begin{defn}[Quadratic form]
		A quadratic form over a field $K$ is a homogeneous polynomial of degree 2 with coefficients in $K$.
	\end{defn}
	For the rest of this paper, we will assume our quadratic forms are non-degenerate and defined over $ \mathbb{Q} $.  A quadratic form in $n$ variables is said to have \textit{rank} $n$.  We can write a rank $n$ quadratic form $q(x) = \sum_{i=1}^n \sum_{j=1}^n a_{ij} x_i x_j $ as a $n \times n$ symmetric matrix $Q$ such that $q(x) = x^t Q x$, by setting the entry $q_{ij}$ to be $a_{ij}$ if $i = j$ and $\frac{a_{ij}}{2}$ if not.
	\begin{defn}[Rational equivalence]
		Two rank $n$ quadratic forms given by symmetric matrices $Q_1$ and $Q_2$ are \textit{rationally equivalent} if there exists a matrix $T\in \GL(n,\mathbb{Q})$ such that $T^t Q_1T = Q_2$.
	\end{defn}
	All quadratic forms are rationally equivalent to a \textit{diagonal} form, that is, a quadratic form whose corresponding symmetric matrix is a diagonal matrix.  We will be considering quadratic forms up to rational equivalence, so we can always use a diagonal representative from each rational equivalence class.  For ease of notation, we will write the diagonal quadratic form $q(x) = \sum_{i=1}^n a_i x_i^2$ as $\langle a_1, \ldots, a_n \rangle$.  There is one more form of equivalence we need to consider.
	\begin{defn}[Projective equivalence]
		Two quadratic forms $q_1$ and $q_2$ are projectively equivalent if there are two nonzero integers $a$ and $b$ such that $aq_1$ and $bq_2$ are rationally equivalent.
	\end{defn}
	A complete set of invariants of diagonal quadratic forms $q$ up to projective equivalence is given by the signature, discriminant, and Hasse-Witt invariants $ \epsilon_p(q) $, which we will define below.  
	
Let $ q(x) = \langle a_1, \ldots, a_n \rangle $.  The signature of $q$ is defined to be $(r,s)$, where $r$ is the number of positive coefficients $a_i$ (or for non-diagonal forms, the positive eigenvalues in the corresponding symmetric matrix), and $s$ is the number of negative $a_i$.  The discriminant is defined to be the product of the coefficients modulo squares, $\prod_{i=1}^n a_i\in \mathbb{Q}^*/(\mathbb{Q}^*)^2$ 
	(or for non-diagonal forms, the determinant of the corresponding symmetric matrix).  The Hasse-Witt invariants are more complicated, and are defined using Hilbert symbols.  The Hilbert symbol is defined as follows:
	\begin{equation*}
		(a,b)_p = \Big\{ \begin{array}{ll} 1 & \text{if~} z^2 = ax^2 + by^2 \text{~has~a~solution~in~} \mathbb{Q}_p \\ -1 & \text{otherwise.} \end{array}
	\end{equation*}
	\begin{defn}[Hasse-Witt invariants] Given a diagonal quadratic form $q = \langle a_1, \ldots, a_n \rangle$ over $\mathbb{Q}$ and a prime $p$, possibly $ \infty $, the \textit{Hasse-Witt invariant of $q$ at $p$} is given by
		\begin{equation*}
			\epsilon_p(q) = \prod_{1 \leq i < j \leq n} (a_i, a_j)_p.
		\end{equation*}
	\end{defn}
	The Hasse-Witt invariants (and the Hilbert symbols) satisfy Hilbert reciprocity, meaning that $\prod \epsilon_p(q) = 1$ for any quadratic form $q$.  Additionally, for any fixed $q$, only finitely many $\epsilon_p(q)$ are equal to $-1$, because any 
	such $p$, aside from $ 2 $ and $ \infty $, must divide at least one coefficient of $ q $. 
	
	We will make use the following lemma of Serre to construct quadratic forms.
	\begin{lemma}[{\cite[Ch. IV, Prop. 7]{Serre}}]
		\label{lemma:serre}
		Let $d$, $r$, $s$, and $ n $ be integers, and $\epsilon_p$ be $1$ or $-1$ for each prime $p$, including $\infty$.  Then there exists a rank $n$ quadratic form $q$ of discriminant $d$, signature $(r,s)$, and Hasse-Witt invariants \
		$\epsilon_p$ if and only if the following conditions are satisfied.
		\begin{enumerate}
			\item $\epsilon_p = 1$ for almost all $p$ and $\prod \epsilon_p = 1$ over all primes $p$.
			\item $\epsilon_p = 1$ if $n = 1$, or if $n = 2$ and the image of $d$ in $\mathbb{Q}_p^\ast / (\mathbb{Q}_p^\ast)^2$ is $-1$.
			\item $r, s \geq 0$ and $n = r+s$.
			\item The sign of $d$ is equal to $(-1)^s$.
			\item $\epsilon_\infty = (-1)^{\frac{s(s-1)}{2}}$.
		\end{enumerate}
	\end{lemma}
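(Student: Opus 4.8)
The plan is to split the biconditional into necessity and sufficiency. Necessity I would dispatch by direct computation of the invariants of a diagonal representative $q = \langle a_1, \dots, a_n \rangle$ realizing the prescribed data. Conditions (3) and (4) are immediate from the definitions of signature and discriminant, since the sign of $\prod_i a_i$ is $(-1)^s$ when exactly $s$ of the coefficients are negative. Condition (1) is Hilbert reciprocity $\prod_p (a_i,a_j)_p = 1$ applied to each pair $(i,j)$, together with the stated finiteness of nontrivial Hilbert symbols. For (5), over $\mathbb{R}$ one has $(a,b)_\infty = -1$ exactly when $a,b < 0$, so $\epsilon_\infty$ counts the unordered pairs of negative coefficients, giving $(-1)^{\binom{s}{2}} = (-1)^{s(s-1)/2}$. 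Condition (2) is the low-rank degeneracy: for $n=1$ the defining product is empty, and for $n=2$, if $d = a_1 a_2 \equiv -1$ in $\mathbb{Q}_p^*/(\mathbb{Q}_p^*)^2$ then $a_2 \equiv -a_1$ and $(a_1,a_2)_p = (a_1,-a_1)_p = 1$.

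For sufficiency I would argue by induction on $n$, the engine being Serre's approximation theorem for Hilbert symbols (Ch. III of \cite{Serre}): given finitely many $a_i \in \mathbb{Q}^*$ and signs $\epsilon_{i,p}$, there exists $x \in \mathbb{Q}^*$ with $(a_i,x)_p = \epsilon_{i,p}$ for all $i,p$ provided almost all signs are $+1$, each $\prod_p \epsilon_{i,p} = 1$, and the system is locally solvable everywhere — this is where Dirichlet's theorem on primes in arithmetic progressions enters. The case $n=1$ is simply $\langle d \rangle$. For $n=2$, I would set $a_2 = d a_1$ so that the discriminant is forced, whereupon the Hasse requirement becomes $(a_1,-d)_p = \epsilon_p$; the approximation theorem produces such an $a_1$ exactly when (1) and (2) hold, because the local problem $(x,-d)_p = -1$ is insoluble precisely when $-d$ is a square in $\mathbb{Q}_p^*$, i.e. when $d \equiv -1$ there, which is condition (2). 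The infinite place of the same system pins the sign of $a_1$ so that the signature comes out to $(r,s)$, consistently with (4) and (5).

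For the inductive step ($n \geq 3$) I would peel off one variable: choose $a \in \mathbb{Q}^*$, of sign selected to decrement $r$ or $s$ (keeping both nonnegative), and form a rank-$(n-1)$ target with discriminant $d' = da$, signature $(r-1,s)$ or $(r,s-1)$, and Hasse invariants $\epsilon'_p = \epsilon_p\,(a, d')_p$ coming from the splitting identity $\epsilon_p(\langle a\rangle \perp q') = (a, \mathrm{disc}\,q')_p\,\epsilon_p(q')$. Hilbert reciprocity gives $\prod_p \epsilon'_p = 1$ for free, and (3)--(5) are arranged by the sign choice, so the only genuine constraint is condition (2) for the residual form, which is nonvacuous only when $n=3$. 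Choosing $a$ (via the approximation theorem) so that every residual local problem is solvable lets me invoke the inductive hypothesis to realize $q'$ over $\mathbb{Q}$, and then $q = \langle a \rangle \perp q'$ is the desired form.

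The main obstacle I anticipate is the bookkeeping in the inductive step: the splitting coefficient $a$ must be chosen so that the residual invariants simultaneously satisfy all of (1)--(5), respecting the rank-$2$ degeneracy (2) when $n=3$ and the signature constraints at $\infty$, and one must package the finitely many local solvability conditions into a single system to which Serre's Hilbert-symbol approximation theorem applies. Verifying that such an $a$ exists — and that the product formula for the residual $\epsilon'_p$ never fails — is the technical heart of the argument, whereas the necessity direction and the base cases are essentially formal.
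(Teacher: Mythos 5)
The paper does not prove this lemma at all: it is quoted directly from Serre \cite[Ch.~IV, Prop.~7]{Serre}, and your proposal essentially reconstructs Serre's own proof --- necessity by direct computation on a diagonal representative, sufficiency by induction on the rank $n$, with the base cases $n=1,2$ and the choice of the split-off coefficient $a$ handled by the existence theorem for rationals with prescribed Hilbert symbols (Serre, Ch.~III, Thm.~4, resting on Dirichlet). Your outline is correct, and the bookkeeping you flag as the technical heart (the rank-$2$ degeneracy condition (2) when $n=3$, and compatibility of the sign of $a$ with the constraint at $\infty$) does go through exactly as you anticipate.
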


	\subsection{Orthogonal groups}
	\label{ortho}
	As in \S \ref{quad} we let $q$ be a rank $n$ quadratic form defined over $\mathbb{Q}$ with associated symmetric matrix $Q$. The orthogonal group and special orthogonal group of $q$ (or $Q$) are the groups:
	\begin{equation*}
		\Or(q) = \{A\in \GL(n,\mathbb{R}) | A^tQA=Q\}~\hbox{and}~\SO(q) = \{A \in \Or(q) | \det(A) =1\}.
	\end{equation*}
	Assuming that $q$ is not a definite quadratic form, arithmetic subgroups of $\Or(q)$ can be constructed as $\Or(q,\mathbb{Z}) = \Or(q)\cap \GL(n,\mathbb{Z})$ (and the obvious subgroup $\SO(q,\mathbb{Z})$). The assumption that $q$ is not definite is simply to to ensure that the arithmetic groups constructed are infinite.
	
	If $q_1$ and $q_2$ are rationally equivalent quadratic forms then there exists $T\in\GL(n,\mathbb{Q})$ such that $T^tQ_1T=Q_2$, in which case $T^{-1}\Or(q_1,\mathbb{Z})T$ is commensurable with $\Or(q_2,\mathbb{Z})$.
	
	Note that if $q$ is a quadratic form of signature $(m,1)$, then $q$ is equivalent over $\mathbb{R}$ to the quadratic form $j_{m+1}(x) = x_1^2 + x_2^2 + \ldots + x_{m}^2 - x_{m+1}^2$ (with associated symmetric matrix $J_{m+1}$).
	
	\subsection{Arithmetic hyperbolic manifolds}\label{arith}
	For the remainder of the paper, it will be convenient to identify hyperbolic $m$-space $\mathbb{H}^m$ with the hyperboloid model
	\begin{equation*}
		\mathbb{H}^m = \{ x \in \mathbb{R}^{m+1} | j_{m+1}(x) = -1, x_{m+1} > 0 \},
	\end{equation*} 
	where $j_{m+1}$ is the quadratic form given above.  The full group of isometries of $\mathbb{H}^m$ is given by $\text{Isom}(\mathbb{H}^m) = \Or^+(j_{m+1})$ where $\Or^+(j_{m+1})$ is the subgroup of $\Or(j_{m+1})$ preserving the upper half sheet of the hyperboloid
	$\{ x \in \mathbb{R}^{m+1} | j_{m+1}(x) = -1\}$.
	
	A finite volume cusped hyperbolic $m$-manifold $M=\mathbb{H}^m/\Gamma$ is defined to be \textit{arithmetic} if $\Gamma$ is commensurable with a group constructed from an arithmetic group $\Or(q,\mathbb{Z})$ as follows. Let $q$ be a rational quadratic form of signature $(m,1)$, and 
	$T\in \GL(m+1,\mathbb{R})$ such that $T^tQT=J_{m+1}$, then the group $T^{-1}\Or(q,\mathbb{Z})T\cap\Or^+(j_{m+1})$ has finite co-volume acting on $\mathbb{H}^m$ and $\Gamma$ is arithmetic if
	it is commensurable with some such group.
	
	Note that by \cite{MA}, two cusped arithmetic hyperbolic manifolds are commensurable  if and only if their associated quadratic forms are projectively equivalent.
	
	\subsection{Proof of Theorem \ref{arithmeticSW}}
Before commencing with the proof we make some preliminary comments to help guide the reader. The proof of Theorem \ref{arithmeticSW} makes more careful use of the construction in \cite{LRcusps}.  In particular, given some closed flat $(m-1)$-manifold $B$, \cite{LRcusps} provides an algorithm for finding a cusped arithmetic orbifold 
	with some cusp cross-section being diffeomorphic to $B$.  Using \cite{McR}, we can pass to some finite cover to arrange that $B$ be a cusp cross-section of an arithmetic hyperbolic $m$-manifold.  
	The algorithm of \cite{LRcusps} outputs arithmetic hyperbolic orbifolds associated to some quadratic form $q$ (as in \S \ref{arith}) where $q$ is built into the algorithm of \cite{LRcusps} using a description of the holonomy group of the flat manifold. This holonomy group is finite, and so preserves a positive definite quadratic form.
	We can use this information to control the commensurability class of the output of the algorithm. 
	
We now give the details for the case of non-vanishing of Stiefel-Whitney classes, the case dealing with $\spin^\mathbb{C}$-structures is then sketched following this line of argument.
	
	\begin{proof} As in Propositions \ref{evens} and \ref{odds}, we find hyperbolic manifolds with the desired properties by running the closed flat $(m-1)$-manifolds $N$ of \S \ref{flat} (or taking a product of such with $S^1$ if necessary) through the algorithm from \cite{LRcusps}.  To make the argument concise we deal with the case of $m=2n+2$, and comment briefly on the case of $2n+3$ at the end of the proof.
				
The algorithm of \cite{LRcusps} depends on a representation of the holonomy group of $N$ (which in the case at hand is $(\mathbb{Z}/2\mathbb{Z})^{n+1}$) into $\GL(2n+1, \mathbb{Z})$, and a choice of quadratic form 
		$f$ of signature $(2n+1, 0)$ which is preserved by each matrix in the image of this representation. In our setting, the holonomy group is generated by the diagonal matrices described in \S \ref{flat}, and hence any diagonal quadratic form of rank $(2n+1)$ is preserved by this representation. Hence for {\em any} positive definite diagonal quadratic form $f$ of signature $(m-1,0)$, the algorithm of \cite{LRcusps} now produces a manifold in the commensurability class described as in \S \ref{arith} associated to the group $\SO(f \oplus \langle 1, -1 \rangle, \mathbb{Z})$.
Note that the translational parts of the generators listed in \S \ref{flat} contain $1/2$, but built into \cite{LRcusps} is the ability to scale so that the resultant image of the groups $\pi_n$ do lie in $\SO(f \oplus \langle 1, -1 \rangle, \mathbb{Z})$. We caution the reader that implementing the scaling described above may change the form $ f \oplus \langle 1, -1 \rangle $, but it will still lie in the same projective equivalence class, and hence the resulting manifold will lie in the same commensurability class.

Given this discussion, to complete the proof, we need to show that {\em every quadratic form} $q$ of signature $(m,1)$ is projectively equivalent to a quadratic form $q^{\prime \prime}$ that can be written as $q^\prime \oplus \langle 1, -1 \rangle$ for some 
diagonal quadratic form $q^\prime$ with signature $(m-1, 0)$ (taking $q^\prime=f$ in the above discussion).

As stated in \S\ref{quad}, 
$q$ is projectively equivalent to any form with the same signature $ (m,1) $, discriminant, and Hasse-Witt invariants.  Let the discriminant of $ q $ be $d$, and its Hasse-Witt invariants be $ h_p = \epsilon_p(q) $.  We will use Lemma \ref{lemma:serre} to construct a quadratic form $q^\prime$ with signature $(m-1,0)$, discriminant $-d$, and Hasse-Witt invariants $\epsilon_p(q^{\prime}) = h_p(-1, -d)_p$, and this will arrange that $q$ and $ q^{\prime \prime}$ have all necessary invariants equal.

We begin by proving that given $q^\prime$ with the invariants described above, $q^{\prime \prime} = q^\prime \oplus \langle 1, -1 \rangle$ must be projectively equivalent to $q$, because they have the same invariants.  Clearly, the signature of $ q^{\prime \prime} $ is $ (m,1) $ and its discriminant is $ d = (-d)(-1) $.  We can compute the Hasse-Witt invariants from the Hilbert symbols of the coefficients.  Let $ q^\prime = \langle x_1, \ldots, x_{m-1} \rangle $, so that $ q^{\prime \prime} = \langle x_1, \ldots, x_{m-1}, 1, -1 \rangle $.  Then:
\begin{align*}
	\epsilon_p(q^{\prime \prime}) & = \left( \prod_{i < j} (x_i, x_j)_p \right) \left( \prod_i (1, x_i)_p \right) \left( \prod_i (-1, x_i)_p \right) (1,-1)_p \\
	& = ( \epsilon_p(q^\prime) )(1) \left( -1, \prod_i x_i \right)_p (1) \\
	& = h_p (-1,-d)_p (-1,-d)_p \\
	& = h_p = \epsilon_p(q)
\end{align*}
Thus $ q $ and $ q^{\prime \prime} $ share their signature, discriminant, and Hasse-Witt invariants, and must be projectively equivalent.
		
To complete the proof, it remains to show that there exists $q^\prime$ with signature $(m-1,0)$, discriminant $ -d $, and Hasse-Witt invariants $ \epsilon_p(q^{\prime}) = h_p(-1, -d)_p $.  Referring to Lemma \ref{lemma:serre}, we note that condition (1) is satisfied because $ \prod_p \epsilon_p(q) = \prod_p h_p = 1 $, and Hilbert reciprocity tells us $ \prod_p (1,-d)_p = 1 $.  Condition (2) is satisfied because $ m-1 \geq 5 $, and condition (3) can be satisfied by choosing the rank to be $ m-1 $.  We know $ d < 0 $ is the discriminant of a quadratic form of signature $ (m,1) $, so $ -d > 0 $ as required by condition (4).  Finally, condition (5) holds because $ \epsilon_\infty(q^\prime) = h_\infty (-1, -d)_\infty = 1 $; note that $ h_\infty = 1 $ because $ q $ has signature $ (m,1) $, and $ (-1,-d)_\infty = 1 $ because $ d $ is negative.  By Lemma \ref{lemma:serre}, $ q^\prime $ exists, and the rest of the argument follows.

The only objects that change when we consider the case in which $ m = 2n+3 $ are $ N $ and its holonomy group.  We can take $ N = N^\prime \times S^1 $, where $ N^\prime $ is a $ (2n+1) $-manifold described in \S\ref{flat}.  The holonomy representation of $ N^\prime $ is the same as that of $ N $, but with an extra row at the bottom and column at the right, each of which is 0 except for a 1 in the bottom right corner.  This does not change the fact that the representation is diagonal, and thus any quadratic form $ f $ of the appropriate rank can be chosen in the algorithm of \cite{LRcusps}.  The rest of the proof follows unchanged. \end{proof}
	
We now discuss the case of obstructing $\spin^\mathbb{C}$ structures.  Suppose $m=2n+2$ and that $N$ is a Generalized Hantzsche-Wendt manifold of dimension $m-1$.  Then the holonomy representation of $N$ is generated by diagonal matrices, as described in \S \ref{GenHW}. The same argument used in the proof of Theorem \ref{arithmeticSW} shows that every cusped arithmetic hyperbolic $m$-manifold is commensurable to a 
manifold $M$ with a cusp cross-section homeomorphic to $N$, and the result now follows as before using an application of Theorem \ref{LPS_Theorem} and Lemma \ref {restrict_spinC}.

To handle the case $m=2n+3$, we consider the manifold $N\times S^1$, where $N$ is a Generalized Hantzsche-Wendt manifold of dimension $m-2$. Now the holonomy representation of $N \times S^1$ will still be diagonal, 
$N\times S^1$ does not admit a $\spin^\mathbb{C}$ structure by Lemma \ref {restrict_spinC}, and the rest of the argument applies as before to complete the proof.\qed

	\section{Final remarks}\label{final}

\subsection{Non-vanishing Stiefel-Whitney classes in other settings}
In \cite{LaR}, the authors consider vanishing and non-vanishing of characteristic classes and numbers of locally symmetric spaces as well as some other classes of manifolds. In \cite[Section 6]{LaR}, they raise the question: {\em Compute the characteristic classes and/or the characteristic numbers for the remaining known examples of non-positively curved Riemannian manifolds.} 
	
	A subset of these ``remaining known examples" arises from doubling cusped hyperbolic manifolds.  More formally: a
	finite-volume cusped hyperbolic $n$-manifold is homeomorphic to the interior of a compact manifold $X$ with boundary $\partial X$ (which need not be connected), with each boundary component a flat manifold of dimension $n-1$. The manifold $X$  can be doubled along $\partial X$ to form the closed manifold $DX$ (the double of $X$).  It is well-known (see for example \cite[Section 1]{Ont}) that $DX$ admits a metric of non-positive curvature.  If we now take $X$ to come from one of the cusped hyperbolic $m$-manifolds $M_1$ constructed in Theorem \ref{theorem_main} or Theorem \ref{theorem_arithmetic}, then the flat manifold $N$ (resp. $N\times S^1$) embeds in $DX$ with trivial normal bundle, and so arguing as before, $DX$ will have non-vanishing Stiefel-Whitney classes in a certain range.  Summarizing, we have the following corollary.
	
	\begin{cor}
		\label{doubles}
		Let $n$ be a positive integer.  For every $m \geq 2n+2$, there exist infinitely many closed orientable $m$-manifolds $M$ admitting a metric of non-positive curvature with non-solvable fundamental group
		such that $w_{2j}(M)\neq 0$ for all $0 \leq 2j \leq n$.\end{cor}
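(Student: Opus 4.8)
The plan is to produce each $M$ as the double of the natural compactification of one of the cusped hyperbolic manifolds already constructed, transporting the non-vanishing of Stiefel-Whitney classes across the doubling through the codimension-one gluing locus. Since the conclusion is vacuous when $n=1$ (the only constraint there being $w_0=1\neq 0$, which every such double satisfies), I would assume $n\geq 2$, so that $m\geq 2n+2\geq 6$ and Theorem \ref{theorem_main} is available.

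First I would fix a finite-volume cusped orientable hyperbolic $m$-manifold $M_1$ as in Theorem \ref{theorem_main}, chosen so that one of its cusp cross-sections is a flat manifold $B$ (namely $B=N$ when $m$ is even and $B=N\times S^1$ when $m$ is odd, in the notation of \S \ref{flat}) with $w_{2j}(B)\neq 0$ for all $0\leq 2j\leq n$. Realizing $M_1$ as the interior of a compact orientable manifold $X$ with boundary $\partial X$, the flat manifold $B$ occurs as one of the components of $\partial X$. I then form the double $DX=X\cup_{\partial X}X$. This is a closed manifold; it is orientable because $X$ is, and it carries a metric of non-positive curvature by \cite[Section 1]{Ont}.

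It remains to verify the two asserted properties of $DX$. For the Stiefel-Whitney classes, the chosen component $B$ of $\partial X$ sits inside $DX$ along the gluing locus, and the two product collars of $B$ (one contributed by each copy of $X$) assemble into a product neighborhood $B\times(-\epsilon,\epsilon)$; hence the inclusion $i\colon B\hookrightarrow DX$ is a codimension-one embedding with trivial normal bundle. Remark \ref{subman} then yields $i^{\ast}w_{2j}(DX)=w_{2j}(B)\neq 0$, so $w_{2j}(DX)\neq 0$ for every $0\leq 2j\leq n$. For the fundamental group, Seifert--van Kampen presents $\pi_1(DX)$ as a graph of groups amalgamated over $\pi_1(\partial X)$ with vertex group $\pi_1(X)=\pi_1(M_1)$; in particular $\pi_1(M_1)$ injects into $\pi_1(DX)$. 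As $\pi_1(M_1)$ is a non-elementary lattice in $\Isom(\mathbb{H}^m)$ it contains a nonabelian free subgroup and is therefore non-solvable, whence $\pi_1(DX)$ is non-solvable as well.

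The main obstacle is to guarantee infinitely many pairwise non-homeomorphic such $M$. Theorem \ref{theorem_main} supplies infinitely many $M_1$, which necessarily have unbounded volume, since in each dimension $m\geq 4$ there are only finitely many finite-volume hyperbolic $m$-manifolds of volume below any fixed bound (Wang's finiteness theorem). To separate their doubles I would appeal to the additivity of Gromov's simplicial volume under gluing along boundary with amenable fundamental group: each cusp cross-section is flat, hence has virtually abelian (amenable) fundamental group, so $\|DX\|=2\,\|X,\partial X\|=2\,\|M_1\|$, which is proportional to $\Vol(M_1)$ by the Gromov--Thurston theorem and therefore unbounded. Consequently infinitely many of the closed manifolds $DX$ are distinct, completing the construction.
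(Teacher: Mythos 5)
Your proposal is correct and takes essentially the same route as the paper: double the compact manifold $X$ whose interior is an $M_1$ from Theorem \ref{theorem_main}, invoke \cite{Ont} for the non-positively curved metric, and use the fact that the flat cusp cross-section sits in $DX$ with trivial normal bundle so that Remark \ref{subman} transports the non-vanishing of $w_{2j}$. You additionally supply careful justifications for two points the paper leaves implicit --- non-solvability of $\pi_1(DX)$ (via $\pi_1$-injectivity of $X$ in its double) and the pairwise distinctness of infinitely many doubles (via additivity of simplicial volume over the amenable flat boundaries) --- and both of these arguments are sound.
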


\subsection{Almost complex structures} An almost complex structure on a smooth manifold $M$ is defined to be an almost complex structure on $TM$; i.e. a bundle endomorphism $J : TM \rightarrow TM$ such that 
$J^2= -\rm{id}|_{TM}$. If $M$ admits an almost complex structure, then $M$ is orientable and has even dimension.

It is known that if $M$ is a smooth manifold with an almost complex structure, then $M$ admits a $\spin^\mathbb{C}$ structure \cite{LM}. Thus as a corollary of Theorem \ref{theorem_arithmetic}
we have:

\begin{cor}
\label{not_almostC} For all $m\geq 6$ and even, every cusped arithmetic hyperbolic $m$-manifold is commensurable with an orientable hyperbolic $m$-manifold which does not admit an almost complex structure.\end{cor}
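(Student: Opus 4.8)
The plan is to obtain this corollary as an immediate consequence of the $M_2$ half of Theorem \ref{theorem_arithmetic}, combined with the implication recalled just above from \cite{LM}: every smooth manifold carrying an almost complex structure also carries a $\spin^\mathbb{C}$ structure. The logical engine is simply the contrapositive of this implication, namely that a manifold admitting no $\spin^\mathbb{C}$ structure can admit no almost complex structure. So the entire task reduces to producing, in the given commensurability class, a hyperbolic $m$-manifold without a $\spin^\mathbb{C}$ structure, and Theorem \ref{theorem_arithmetic} already does exactly this.

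First I would verify that the hypotheses of Theorem \ref{theorem_arithmetic} are available for every even $m \geq 6$. That theorem applies whenever $m \geq 2n+2 \geq 6$ for some positive integer $n$, and taking $n=2$ gives $2n+2 = 6 \leq m$ for any $m \geq 6$. Hence, for any such $m$ and any cusped arithmetic hyperbolic $m$-manifold, Theorem \ref{theorem_arithmetic} supplies an orientable hyperbolic $m$-manifold $M_2$ in the same commensurability class that does not admit a $\spin^\mathbb{C}$ structure. This $M_2$ is the candidate manifold.

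It then remains only to apply the implication almost complex $\Rightarrow \spin^\mathbb{C}$ in the correct direction: since $M_2$ admits no $\spin^\mathbb{C}$ structure, it admits no almost complex structure, which is the desired conclusion. I would note that the restriction to even $m$ is what makes the statement non-vacuous, since an almost complex structure forces the dimension to be even; for odd $m$ the conclusion would hold trivially for every manifold. The point here is that although the dimension permits an almost complex structure, the $\spin^\mathbb{C}$ obstruction of Theorem \ref{theorem_arithmetic} nonetheless rules one out. There is essentially no hard step in this argument; the only things requiring care are confirming that the dimension range $m \geq 2n+2 \geq 6$ covers all even $m \geq 6$ (it does, via $n=2$) and that the implication from \cite{LM} is invoked as stated rather than as its converse.
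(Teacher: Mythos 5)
Your proposal is correct and is exactly the paper's argument: the corollary is deduced by taking the manifold $M_2$ of Theorem \ref{theorem_arithmetic} (available for all even $m \geq 6$) and applying the contrapositive of the fact from \cite{LM} that an almost complex structure yields a $\spin^\mathbb{C}$ structure. Your additional checks on the dimension range and the direction of the implication are sound and consistent with the paper.
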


Similarly, using the cusped hyperbolic $m$-manifolds $M_2$ constructed in Theorem \ref{theorem_arithmetic} we can build analogous examples to those in Corollary \ref{doubles} without a $\spin^\mathbb{C}$ structure and so without almost complex structure, namely:

\begin{cor}
\label{doubles_notAC}
For every $m \geq 2n+2$, there exist infinitely many closed orientable $m$-manifolds $M$ admitting a metric of non-positive curvature with non-solvable fundamental group that do not admit a $\spin^\mathbb{C}$ structure. If $m$ is even, these do not admit an almost complex structure.\end{cor}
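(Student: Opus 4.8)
The plan is to run the doubling construction of Corollary~\ref{doubles} with the $\spin^\mathbb{C}$-free manifolds $M_2$ in place of the manifolds $M_1$. Fix $m \geq 2n+2 \geq 6$ as in Theorem~\ref{theorem_arithmetic}, and let $M_2$ be one of the infinitely many cusped orientable hyperbolic $m$-manifolds supplied by Theorem~\ref{theorem_main} that admit no $\spin^\mathbb{C}$ structure. Writing $M_2$ as the interior of a compact orientable manifold $X$ with boundary, each component of $\partial X$ is a cusp cross-section, and by construction one such component is the flat manifold $B$, where $B = N$ is a Generalized Hantzsche--Wendt manifold when $m$ is even and $B = N\times S^1$ when $m$ is odd. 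Let $DX$ be the double of $X$ along $\partial X$; it is a closed orientable $m$-manifold which admits a metric of non-positive curvature by \cite[Section~1]{Ont}.

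First I would show that $DX$ carries no $\spin^\mathbb{C}$ structure. The distinguished component $B$ appears in $DX$ as part of the seam of the double, and gluing a collar of $B$ in $X$ to its mirror image produces a bicollar $B\times(-1,1)\subset DX$; hence $B\hookrightarrow DX$ is a codimension-one embedding with trivial normal bundle. Exactly as in the proof of Theorem~\ref{arithmeticSW}, Theorem~\ref{LPS_Theorem} together with Lemma~\ref{restrict_spinC} shows that $B$ admits no $\spin^\mathbb{C}$ structure. Applying the contrapositive of Lemma~\ref{restrict_spinC} to the embedding $B\hookrightarrow DX$ then shows that $DX$ admits no $\spin^\mathbb{C}$ structure.

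Next I would dispatch the auxiliary claims. The folding map $DX\to X$ retracts $DX$ onto $X$, so $\pi_1(X)\cong\pi_1(M_2)$ is a split subgroup of $\pi_1(DX)$; since $M_2$ is a finite-volume hyperbolic manifold, $\pi_1(M_2)$ is non-elementary and contains a non-abelian free subgroup, so $\pi_1(DX)$ is non-solvable. For the final assertion, when $m$ is even $DX$ is orientable and of even dimension, and an almost complex structure would induce a $\spin^\mathbb{C}$ structure (cf. Corollary~\ref{not_almostC}); thus the absence of a $\spin^\mathbb{C}$ structure on $DX$ rules out any almost complex structure.

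It remains to produce infinitely many pairwise non-homeomorphic $DX$, and this is the step requiring the most care, since the preceding ingredients reduce directly to Theorem~\ref{theorem_arithmetic} and the doubling machinery of Corollary~\ref{doubles}. I would distinguish the doubles by simplicial volume: the boundary components of $X$ are flat manifolds, so their fundamental groups are amenable, and additivity of the Gromov norm along amenable boundary gives $\|DX\| = 2\,\|X,\partial X\| = 2\,\|M_2\|$, a positive multiple of $\Vol(M_2)$. Theorem~\ref{theorem_main} supplies infinitely many $M_2$, and by the Wang finiteness theorem (only finitely many finite-volume hyperbolic $m$-manifolds with $m\geq 4$ have volume below any fixed bound) their volumes are unbounded; consequently $\|DX\|$ takes infinitely many values, so the doubles $DX$ are pairwise non-homeomorphic. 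This completes the plan.
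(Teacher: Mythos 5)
Your proposal is correct and follows essentially the same route as the paper: double the compact manifold $X$ underlying $M_2$, observe that the Generalized Hantzsche--Wendt cusp cross-section (or its product with $S^1$) sits in $DX$ as a codimension-one submanifold with trivial normal bundle, and apply Theorem \ref{LPS_Theorem} with the contrapositive of Lemma \ref{restrict_spinC}, plus the fact that an almost complex structure would yield a $\spin^\mathbb{C}$ structure. The details you add beyond the paper's one-line argument --- the bicollar, the folding retraction for non-solvability, and distinguishing the doubles by simplicial volume via amenable-gluing additivity and Wang finiteness --- are sound and in fact make explicit the infinitude claim that the paper leaves implicit.
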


	\medskip
	
	\noindent  Department of Mathematics,\\
	Rice University,\\
	Houston, TX 77005, USA.\\
	{\tt{alan.reid@rice.edu, cds7@rice.edu}}

\end{document}